\documentclass[amsfonts]{amsart}


\usepackage{amsmath,amssymb,amsthm,eucal}
\usepackage[dvipdfmx, colorlinks=true, backref=page]{hyperref}
\usepackage[all]{xy}


\numberwithin{equation}{section}

\SelectTips{eu}{12}


\newtheorem{theorem}{Theorem}[section]
\newtheorem{proposition}[theorem]{Proposition}
\newtheorem{lemma}[theorem]{Lemma}
\newtheorem{corollary}[theorem]{Corollary}

\theoremstyle{definition}

\newtheorem{problem}[theorem]{Problem}

\theoremstyle{remark}


\newcommand{\Z}{\mathbb{Z}}

\newcommand{\R}{\mathbb{R}}

\newcommand{\h}{\mathtt{h}}


\title{Van Kampen-Flores theorem and Stiefel-Whitney classes}

\author{Daisuke Kishimoto}
\address{Faculty of Mathematics, Kyushu University, Fukuoka 819-0395, Japan}
\email{kishimoto@math.kyushu-u.ac.jp}

\author{Takahiro Matsushita}
\address{Department of Mathematical Sciences, University of the Ryukyus Nishihara-cho, Okinawa 903-0213, Japan}
\email{mtst@sci.u-ryukyu.ac.jp}

\subjclass[2010]{Primary 57Q35, 52A37, Secondary 55R80}

\keywords{van Kampen-Flores theorem, manifold triangulation, Stiefel-Whitney class, deleted product}


\begin{document}

  \maketitle


  \begin{abstract}
    The van Kampen-Flores theorem states that the $d$-skeleton of a $(2d+2)$-simplex does not embed into $\R^{2d}$. We prove the van Kampen-Flores theorem for triangulations of manifolds satisfying a certain condition on their Stiefel-Whitney classes. In particular, we show that the $d$-skeleton of a triangulation of a $(2d+1)$-manifold with non-trivial total Stiefel-Whitney class does not embed into $\R^{2d}$.
  \end{abstract}


  \section{Introduction}

  The van Kampen-Flores theorem states that the $d$-skeleton of a $(2d+2)$-simplex does not embed into $\R^{2d}$. Actually, van Kampen \cite{K} and Flores \cite{F} proved the following stronger statement which is now called the van Kampen-Flores theorem too. For any continuous map $f\colon \Delta^{2d+2}_d\to\R^{2d}$, there are disjoint simplices $\sigma,\tau$ of $\Delta^{2d+2}_d$ such that $f(\sigma)$ and $f(\tau)$ have a point in common, where $\Delta^n$ and $K_d$ denote the $n$-simplex and the $d$-skeleton of a simplicial complex $K$, respectively. See \cite{BFZ} for a sharpened version. There are several generalizations of the van Kampen-Flores theorem \cite{BFZ,BZ,GMPPTW,JPZ,JVZ,KM,MSP,NW,S,V}. Among others, the authors
\cite{KM} gave a generalization to maps out of certain CW complexes into the Euclidean space, which includes the following case. Let $K$ be a triangulation of a mod 2 homology $(2d+1)$-sphere. Then for any continuous map $f\colon K_d\to\R^{2d}$, there are disjoint simplices $\sigma,\tau$ of $K_d$ such that $f(\sigma)$ and $f(\tau)$ have a point in common. (cf. \cite{NW})

  We also have an analogous non-embeddability result for the 1-skeleton of a triangulation of a surface as follows. Let $G$ be the 1-skeleton of a triangulation of a closed surface $S$. Then we have $e=3v-3\chi(S)$, where $v$ and $e$ are the numbers of vertices and edges of $G$ and $\chi(S)$ denotes the Euler characteristic of $S$. On the other hand, as in \cite[Corollary 9.5.1]{BM}, we have $e\le 3v-6$ whenever $G$ is planar, and so the statement is proved. Motivated by the above two results on the non-embeddability of skeleta of manifold triangulations, we pose the following problem, where manifolds will be assumed to be paracompact, smooth and without boundary throughout the paper.

  \begin{problem}
    Find a class of manifolds such that the $d$-skeleta of their triangulations are not embeddable into $\R^{2d}$.
  \end{problem}

  In this paper, we give a class of manifolds for which the $d$-skeleta of their triangulations are not embeddable into $\R^{2d}$. We will take an approach different from the previous works \cite{HKTT,KM} in which we assumed high acyclicity. Instead of acyclicity, we consider a condition on characteristic classes of manifolds. To state the main result, we set notation. For $0\le k\le d-1$, we define polynomials $a_1^{(k)},\ldots,a_d^{(k)}\in\Z_2[x_1,x_2,\ldots,x_d]$ as follows. Let
  \[
    a_i^{(0)}=x_i\quad(1\le i\le d)
  \]
  and
  \[
    a_i^{(k)}=x_ia_1^{(k-1)}+a_{i+1}^{(k-1)}\quad(1\le i\le d)
  \]
  for $1\le k\le d-1$, where we set $a_{d+1}^{(k-1)}=0$. For a $d$-manifold $M$, we define
  \[
    w^{(k)}_i(M)=a_i^{(k)}(w_1(TM),\ldots,w_d(TM))
  \]
  for $1\le i\le d$, and set $w^{(k)}(M)=1+w_1^{(k)}(M)+\cdots+w_d^{(k)}(M)$, where $TM$ denotes the tangent bundle of $M$. Note that $w^{(0)}(M)$ coincides with the total Stiefel-Whitney class $w(M)$. Now we state the main theorem.

  \begin{theorem}
    \label{main}
    Let $K$ be a triangulation of a $(2d+k+1)$-manifold $M$ with $w^{(k)}(M)\ne 1$, where $0\le k\le d-1$. Then for any continuous map $f\colon K_{d+k}\to\R^{2d+2k}$, there are disjoint simplices $\sigma,\tau$ of $K_{d+k}$ such that $f(\sigma)$ and $f(\tau)$ have a point in common.
  \end{theorem}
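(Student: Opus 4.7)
The plan is a contradiction argument built on the standard deleted-product obstruction, together with a Stiefel-Whitney computation on $M$. Assume, for contradiction, that some $f\colon K_{d+k}\to\R^{2d+2k}$ has no pair of disjoint simplices whose images meet. Then
\[
\Phi\colon (K_{d+k})^{\times 2}_\Delta\to S^{2d+2k-1},\qquad \Phi(x,y)=\frac{f(x)-f(y)}{\|f(x)-f(y)\|},
\]
is a $\Z_2$-equivariant map (swap on the source, antipodal action on the sphere). Writing $\eta\in H^1((K_{d+k})^{\times 2}_\Delta/\Z_2;\Z_2)$ for the classifying class of the resulting double cover, one thus has $\eta^{2d+2k}=0$. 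The whole proof reduces to showing that $w^{(k)}(M)\ne 1$ forces the opposite conclusion $\eta^{2d+2k}\ne 0$.

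To bring the Stiefel-Whitney classes of $M$ into play, I would first enlarge the deleted product: the natural inclusion into $K^{\times 2}_\Delta$ is $\Z_2$-equivariant, and $K^{\times 2}_\Delta$ is equivariantly homotopy equivalent to $M^{\times 2}\setminus\Delta$. A tubular neighborhood of the diagonal identifies a punctured neighborhood with the tangent sphere bundle $S(TM)$ equipped with fiberwise antipodal action, so after quotienting by $\Z_2$ the projective bundle $\R P(TM)$ sits inside $K^{\times 2}_\Delta/\Z_2$, and the restriction of $\eta$ is the tautological Borel class $\nu$. This yields the classical relation
\[
\nu^{2d+k+1}+w_1(TM)\nu^{2d+k}+\cdots+w_{2d+k+1}(TM)=0
\]
in $H^*(\R P(TM);\Z_2)$. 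The central computation is the iterative application of this relation to rewrite $\nu^{2d+2k}$ as a polynomial in $\nu$ of degree below $2d+k+1$ with coefficients in $H^*(M;\Z_2)$. A single pass converts coefficients $a_i$ into $x_ia_1+a_{i+1}$, which is exactly the recursion defining $a_i^{(k)}$. After $k$ passes, and after controlling the classes $w_j(TM)$ of degree $j>d$ (via Wu's formula or by the dimension of $M$), one obtains an expansion of $\nu^{2d+2k}$ which, paired against the top fundamental class of $\R P(TM)$, is nontrivial precisely when $w^{(k)}(M)\ne 1$.

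The main obstacle will be the final step: promoting this nonvanishing from $\R P(TM)$ to the much smaller space $(K_{d+k})^{\times 2}_\Delta/\Z_2$. A skeletal restriction need not preserve cohomology, so one cannot simply pull back. My plan is to construct, for appropriately chosen simplices $\sigma$ of $K$, explicit chains in $(K_{d+k})^{\times 2}_\Delta$ assembled from $\sigma$ together with $(d+k)$-dimensional subcomplexes of links of its faces; here the dimension count is on our side, since the link of a $j$-simplex in the $(2d+k+1)$-manifold $M$ is a $(2d+k-j)$-sphere, accommodating $(d+k)$-skeleta of the required dimension. Assembling these into a $\Z_2$-invariant $(2d+2k)$-cycle whose image in $\R P(TM)$ represents the class witnessing the nonvanishing of $\nu^{2d+2k}$ will contradict $\eta^{2d+2k}=0$ and complete the proof.
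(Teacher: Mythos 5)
The broad outline---reduce to a deleted-product obstruction, identify the relevant characteristic class with a Stiefel--Whitney/projective-bundle computation on the tangent sphere bundle of $M$, and use the polynomial recursion defining $a_i^{(k)}$ to iterate the defining relation in $H^*(\R P(TM);\Z_2)$---is parallel to Section~2 of the paper. But there is a genuine gap exactly where you flag it, and the paper resolves it by a device you do not mention: the constraint method.

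Your setup places the entire burden on showing $\eta^{2d+2k}\neq 0$ in $H^*(\widehat{(K_{d+k})}^\star/\Z_2;\Z_2)$, i.e.\ on the deleted product \emph{of the skeleton} $K_{d+k}$. This space is a proper subcomplex of the $(2d+2k)$-skeleton of $\widehat{K}^\star$ (it contains only cells $\sigma\times\tau$ with each factor of dimension $\le d+k$, not all cells with $\dim\sigma+\dim\tau\le 2d+2k$), and there is no natural map from it to $\R P(TM)$: the tubular-neighborhood comparison between $K^\star\simeq M^\star$ and the tangent sphere bundle requires the whole $K$, not its skeleton. Your proposed fix---assembling $\Z_2$-invariant cycles from simplices and skeleta of links and then checking their "image in $\R P(TM)$"---is a sketch rather than an argument; in particular it is unclear what map is supposed to carry those cycles into $\R P(TM)$, and the nonvanishing of $\varpi_1(S)^{2d+2k}$ lives in $H^*(S/\Z_2;\Z_2)$, which is not obviously accessible from the skeleton's deleted product. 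This is precisely the mismatch between "skeleton of the deleted product" (handled by Lemma~\ref{h(X)}(3)) and "deleted product of the skeleton" (your object).

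The paper sidesteps this entirely. It first proves a topological Radon theorem (Theorem~\ref{Radon}) for maps out of the \emph{full} triangulation $K\to\R^{\dim M+k}$, where the deleted-product height argument works cleanly: $\h(\widehat{K}^\star)=\h(K^\star)=\h(M^\star)\ge\dim M+k$ via Lemma~\ref{deleted product} and Proposition~\ref{h(M)}, and then Lemma~\ref{h(X)}(3), Lemma~\ref{embedding}, and Proposition~\ref{Borsuk-Ulam} give intersecting disjoint simplices $\sigma,\tau$ with $\dim\sigma+\dim\tau\le\dim M+k$. Theorem~\ref{main} is then deduced by the constraint method of \cite{BFZ}: extend $f$ from $K_{d+k}$ to $\tilde f$ on $K$, append one extra coordinate $c$ vanishing exactly on $K_{d+k}$ to get $F=(\tilde f,c)\colon K\to\R^{2d+2k+1}$, apply Theorem~\ref{Radon} with $M$ of dimension $2d+k+1$, and use the dimension bound $\dim\sigma+\dim\tau\le 2d+2k+1$ together with $c(x)=c(y)$ to force both simplices into $K_{d+k}$. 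Your proposal needs either to supply a genuine construction of the $(2d+2k)$-cycle you allude to (which is not at all routine) or to adopt the constraint-method reduction, which is the step that actually makes the proof close.
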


  The $k=0$ case of Theorem \ref{main} is of particular interest.

  \begin{corollary}
    \label{k=0}
    Let $K$ be a triangulation of a $(2d+1)$-manifold $M$ with $w(M)\ne 1$. Then for any continuous map $f\colon K_d\to\R^{2d}$, there are disjoint simplices $\sigma,\tau$ of $K_d$ such that $f(\sigma)$ and $f(\tau)$ have a point in common.
  \end{corollary}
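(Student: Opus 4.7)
The plan is simply to derive Corollary \ref{k=0} as the $k=0$ specialization of Theorem \ref{main}; there is no independent obstacle to overcome, since all of the analytic content is packaged in that theorem and the task here is purely a dictionary check.

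Setting $k=0$ in Theorem \ref{main}, the manifold dimension $2d+k+1$ becomes $2d+1$, the skeleton $K_{d+k}$ becomes $K_d$, and the target $\R^{2d+2k}$ becomes $\R^{2d}$, matching the data of Corollary \ref{k=0} verbatim, and the conclusion about the existence of disjoint simplices $\sigma,\tau$ of $K_d$ whose images under $f$ share a point transcribes word-for-word.

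For the characteristic class hypothesis, the recursion yields $a_i^{(0)}=x_i$, so $w_i^{(0)}(M)=w_i(M)$ for $1\le i\le d$ and therefore $w^{(0)}(M)=1+w_1(M)+\cdots+w_d(M)$. As noted after the definition of $w^{(k)}(M)$, this coincides with the total Stiefel--Whitney class $w(M)$; in the $(2d+1)$-dimensional setting relevant here, one can see this directly from Wu's formula $w(TM)=\operatorname{Sq}(v)$, which, together with the vanishing $v_i=0$ for $2i>2d+1$, expresses each $w_i(M)$ with $i>d$ as a polynomial in $w_1(M),\ldots,w_d(M)$, and conversely gives a triangular system that recovers $v_1,\ldots,v_d$ from $w_1,\ldots,w_d$. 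Hence the conditions $w(M)\ne 1$ and $w^{(0)}(M)\ne 1$ are equivalent, and Theorem \ref{main} at $k=0$ produces the required simplices $\sigma$ and $\tau$.
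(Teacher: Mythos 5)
Your specialization of Theorem \ref{main} to $k=0$ is correct, and this is exactly what the paper does: the corollary is stated with no separate proof, just the remark that it is the $k=0$ case. The only place where you diverge is the identification $w^{(0)}(M)=w(M)$, and there you take an unnecessarily roundabout path because of a misreading of the definition.

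In the paper's definition, the letter $d$ denotes the dimension of the manifold under consideration, so for the $(2d+1)$-manifold $M$ of the corollary one substitutes $2d+1$ for that $d$ and gets
\[
w^{(0)}(M)=1+w_1(TM)+\cdots+w_{2d+1}(TM)=w(M)
\]
on the nose, since $TM$ is a $(2d+1)$-plane bundle. You instead truncate the sum at degree $d$ (treating the $d$ in the definition as the same $d$ appearing in Theorem \ref{main}), obtaining $1+w_1(M)+\cdots+w_d(M)$, and then invoke Wu's formula $w=\operatorname{Sq}(v)$ together with $v_i=0$ for $2i>2d+1$ to argue that the vanishing of $w_1,\ldots,w_d$ already forces $w(M)=1$. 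That Wu argument is in fact correct (the triangular system $w_j=v_j+\text{terms in }v_1,\ldots,v_{j-1}$ gives $v_1=\cdots=v_d=0$, hence $v=1$ and $w=1$), so your proof is sound; but it is solving a nonexistent problem, since the equality $w^{(0)}(M)=w(M)$ is already definitional once the notation is parsed as the paper intends. The rest of your dictionary check (dimension, skeleton, target) is exactly right.
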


  Let $M$ be a $d$-manifold. In general, it is not easy to check the non-triviality of $w^{(k)}(M)$. However, the non-triviality of $w^{(1)}(M)$ can be deduced from standard topological conditions. By definition, we have
  \[
    w_d^{(1)}(M)=w_1(TM)w_{d-1}(TM)+w_d(TM).
  \]
  If $M$ is orientable, then $w_1(M)=0$. Suppose $M$ is closed and connected. Then by the Euler-Poincar\'e theorem, $w_d(M)$ coincides with the mod 2 reduction of the Euler characteristic $\chi(M)$ in $H^d(M;\Z_2)\cong\Z_2$. So $w_d^{(1)}(M)\ne 0$ whenever $M$ is orientable and $\chi(M)$ is odd, and thus by Theorem \ref{main}, we obtain the following corollary which can be thought of as a generalization of the above mentioned non-embeddability of the 1-skeleton of a triangulation of a closed surface into a plane.

  \begin{corollary}
    Let $K$ be a triangulation of a closed connected orientable $(2d+2)$-manifold. If $\chi(K)$ is odd, then for any continuous map $f\colon K_{d+1}\to\R^{2d+2}$, there are disjoint simplices $\sigma,\tau$ of $K_{d+1}$ such that $f(\sigma)$ and $f(\tau)$ have a point in common.
  \end{corollary}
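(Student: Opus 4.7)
The plan is to derive the corollary directly from Theorem~\ref{main} applied with $k=1$: the dimensions line up, since $2d+k+1=2d+2$ and $2d+2k=2d+2$, and the relevant skeleton is $K_{d+k}=K_{d+1}$. Thus the only thing to check is the hypothesis $w^{(1)}(M)\ne 1$ on the manifold $M$ triangulated by $K$, and the paragraph immediately preceding the corollary already sketches exactly that verification; my ``proof'' is essentially an assembly of those ingredients.

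More concretely, I would use the top-dimensional instance of the identity recorded in the excerpt, namely
\[
w_n^{(1)}(M)=w_1(TM)\,w_{n-1}(TM)+w_n(TM)
\]
in $H^n(M;\Z_2)\cong\Z_2$, where $n=2d+2$ is the dimension of $M$. This is a direct consequence of the recursion defining the polynomials $a_i^{(k)}$ together with the convention $a_{n+1}^{(0)}=0$. Orientability of $M$ gives $w_1(TM)=0$, so this component collapses to the top Stiefel--Whitney class $w_n(TM)$. The Euler--Poincar\'e theorem then identifies $w_n(TM)$ with the mod $2$ reduction of the Euler characteristic $\chi(M)=\chi(K)$ inside $H^n(M;\Z_2)\cong\Z_2$; the hypothesis that $\chi(K)$ is odd therefore forces $w_n^{(1)}(M)\ne 0$, and a fortiori $w^{(1)}(M)\ne 1$.

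With this hypothesis of Theorem~\ref{main} verified, the theorem itself does all the remaining work: for any continuous $f\colon K_{d+1}\to\R^{2d+2}$ it produces a pair of disjoint simplices $\sigma,\tau$ of $K_{d+1}$ with $f(\sigma)\cap f(\tau)\ne\emptyset$, which is exactly the conclusion of the corollary. I do not expect any substantive obstacle: the only nontrivial step is the non-triviality calculation for $w^{(1)}(M)$, and the combination of orientability, oddness of $\chi(K)$, and Euler--Poincar\'e makes it transparent.
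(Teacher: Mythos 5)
Your proof is essentially the paper's own argument: apply Theorem~\ref{main} with $k=1$ (the dimensions $2d+k+1=2d+2$, $2d+2k=2d+2$ and the skeleton $K_{d+k}=K_{d+1}$ line up as you note), and verify the hypothesis $w^{(1)}(M)\ne 1$ by combining orientability with the Euler--Poincar\'e identification of the top Stiefel--Whitney class with $\chi(M)\bmod 2$.

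There is, however, a shared index slip you should be alert to. You claim the identity
\[
w_n^{(1)}(M)=w_1(TM)\,w_{n-1}(TM)+w_n(TM)
\]
is a direct consequence of the recursion with $a_{n+1}^{(0)}=0$, but running the recursion actually gives $a_n^{(1)}=x_n a_1^{(0)}+a_{n+1}^{(0)}=x_1x_n$, i.e.\ $w_n^{(1)}(M)=w_1(TM)\,w_n(TM)$; since each $a_i^{(k)}$ has weighted degree $i+k$, this class lives in $H^{n+1}(M;\Z_2)=0$ and is automatically trivial. The formula you (and the paper) write is in fact the next-to-top component: $a_{n-1}^{(1)}=x_{n-1}x_1+x_n$, so $w_{n-1}^{(1)}(M)=w_1(TM)\,w_{n-1}(TM)+w_n(TM)\in H^{n}(M;\Z_2)$. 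With this relabeling the argument is unchanged: orientability kills $w_1(TM)$, Euler--Poincar\'e identifies $w_n(TM)$ with $\chi(M)\bmod 2$, and oddness of $\chi(K)=\chi(M)$ yields $w_{n-1}^{(1)}(M)\ne 0$, hence $w^{(1)}(M)\ne 1$, which is exactly what Theorem~\ref{main} requires.
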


The total Stiefel-Whitney class of a mod 2 homology sphere is trivial, and so we cannot apply Corollary \ref{k=0} to it. But as in \cite{HKK}, the van Kampen-Flores theorem holds for a triangulation of a mod 2 homology sphere. Besides mod 2 homology spheres, there are important classes of manifolds with trivial total Stiefel-Whitney classes such as (real) moment-angle manifolds as in \cite{HKK}. So it would be interesting to find whether or not the van Kampen-Flores theorem holds for such manifolds.

  In Section 2, we recall the Stiefel-Whitney height of a free $\Z_2$-space, and compute it for the deleted product of a manifold with $w^{(k)}(M)\ne 1$. In Section 3, we apply the result of Section 2 to prove the topological Radon theorem for manifold triangulations (Theorem \ref{Radon}), and then prove Theorem \ref{main} by the constraint method \cite{BFZ} applied to it.

\subsection*{Acknowledgements}
The first author was partly supported by JSPS KAKENHI Grant Numbers JP17K05248 and JP19K03473, and the second author was partly supported by JSPS KAKENHI Grant Numbers JP19K14536 and JP23K12975.
The authors would like to thank Kenta Ozeki for useful information on embeddings of graphs.


  \section{Stiefel-Whitney height}

  In this section, we recall the Stiefel-Whitney height of a free $\Z_2$-space, and compute it for the deleted product of a manifold by comparing the deleted product with the tangent sphere bundle, similarly to the tangent microbundle \cite{M1}.

  We define the Stiefel-Whitney class of a free $\Z_2$-space as in \cite[Definition 8.21]{K}. Let $X$ be a free $\Z_2$-space. If $X$ has the $\Z_2$-homotopy type of a $\Z_2$-complex, then there is a $\Z_2$-map $X\to S^\infty$ which is unique, up to $\Z_2$-homotopy. See \cite[Proposition 8.16]{K}, for example. In particular, this map induces a map $\rho \colon X/\Z_2 \to S^\infty/ \Z_2 = \R P^\infty$, which is unique, up to homotopy. The map $\rho$ is often called the classifying map of $X$. The Stiefel-Whitney class of $X$ is defined by
  \[
    \varpi_1(X)=\rho^*(a)\in H^1(X/\Z_2;\Z_2)
  \]
  where $a$ is a generator of $H^1(\R P^\infty;\Z_2)\cong\Z_2$. By the uniqueness of the classifying map, the Stiefel-Whitney class of a free $\Z_2$-space satisfies the naturality such that for a $\Z_2$-map $f\colon X\to Y$ between free $\Z_2$-spaces,
  \begin{equation}
    \label{naturality}
    \bar{f}^*(\varpi_1(Y))=\varpi_1(X)
  \end{equation}
  where the map $\bar{f}\colon X/\Z_2\to Y/\Z_2$ is covered by $f$.

  Now we define the Stiefel-Whitney height of $X$ by
  \[
    \h(X)=\max\{n\ge 0\mid\varpi_1(X)^n\ne 0\}.
  \]
  Clearly, the Stiefel-Whitney height is an invariant of the $\Z_2$-homotopy type of a free $\Z_2$-space. Analogous invariants for more general group actions are considered in \cite{FH,HK,HKTT,KM,V}. We record basic properties of the Stiefel-Whitney height that we are going to use, where we will freely use the fact that each skeleton of a $\Z_2$-complex is a $\Z_2$-complex.

  \begin{lemma}
    \label{h(X)}
    Let $X,Y$ be free $\Z_2$-complexes.
    \begin{enumerate}
      \item There is an inequality
      \[
        \h(X)\le\dim X.
      \]

      \item If there is a $\Z_2$-map $f\colon X\to Y$, then
      \[
        \h(X)\le\h(Y).
      \]

      \item If $X$ is a free $\Z_2$-complex and $\h(X)\ge n$, then
      \[
        \h(X_n)=n.
      \]
    \end{enumerate}
  \end{lemma}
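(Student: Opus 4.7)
My approach is to handle the three items in turn, leaning on the naturality formula~\eqref{naturality}.

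Part (1) is essentially a dimension count: since the $\Z_2$-action on the CW complex $X$ is cellular, the orbit space $X/\Z_2$ inherits a CW structure with $\dim(X/\Z_2)=\dim X$, and $\varpi_1(X)^n\in H^n(X/\Z_2;\Z_2)$ must vanish once $n>\dim X$. For part (2), the naturality formula gives $\bar f^*(\varpi_1(Y))=\varpi_1(X)$, and since $\bar f^*$ is a ring homomorphism we obtain $\bar f^*(\varpi_1(Y)^n)=\varpi_1(X)^n$; if the right-hand side is non-zero then so is $\varpi_1(Y)^n$, which yields $\h(X)\le\h(Y)$.

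For part (3), I would combine the previous two. Since $\dim X_n=n$, item (1) gives $\h(X_n)\le n$. For the reverse inequality, the inclusion $\iota\colon X_n\hookrightarrow X$ is a $\Z_2$-map, and the induced map $\bar\iota\colon X_n/\Z_2\hookrightarrow X/\Z_2$ realises $X_n/\Z_2$ as the $n$-skeleton of $X/\Z_2$. Because the relative cellular cochain groups $C^k(X/\Z_2, X_n/\Z_2;\Z_2)$ vanish for $k\le n$, the long exact sequence of the pair forces the restriction map $\bar\iota^*\colon H^n(X/\Z_2;\Z_2)\to H^n(X_n/\Z_2;\Z_2)$ to be injective. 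Naturality then gives $\bar\iota^*(\varpi_1(X)^n)=\varpi_1(X_n)^n$, and the hypothesis $\h(X)\ge n$ forces $\varpi_1(X)^n\ne 0$, so $\varpi_1(X_n)^n\ne 0$ and $\h(X_n)\ge n$.

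The only step that requires any real thought is the injectivity of $\bar\iota^*$ in part (3); everything else is a direct unpacking of the definitions together with~\eqref{naturality}.
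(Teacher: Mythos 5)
Your argument is correct and follows the same route as the paper's for all three parts; in fact for part (3) you supply the reason (vanishing of the relative cellular cochain groups below degree $n+1$) for the injectivity of $\bar\iota^*\colon H^n(X/\Z_2;\Z_2)\to H^n(X_n/\Z_2;\Z_2)$, which the paper simply asserts. The only other deviation is in part (1), where you invoke directly that $H^n(X/\Z_2;\Z_2)=0$ for $n>\dim(X/\Z_2)$, whereas the paper instead compresses the classifying map into $\R P^{\dim X}$ by cellular approximation and concludes $\varpi_1(X)^{\dim X+1}=0$; both are standard and lead to the same bound.
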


  \begin{proof}
    (1) Since $X/\Z_2$ is a CW complex of dimension $n$, it follows from the cellular approximation theorem that the classifying map $X/\Z_2\to\R P^\infty$ can be compressed into $\R P^n\subset\R P^\infty$, where $n=\dim X$. Then we get $\varpi_1(X)^{n+1}=0$, implying $\h(X)\le n$.

    \noindent(2) Let $\bar{f}\colon X/\Z_2\to Y/\Z_2$ be the map covered by $f$. Then by \eqref{naturality}, we have $\bar{f}^*(\varpi_1(Y))=\varpi_1(X)$, and so
    \[
      0\ne \varpi_1(X)^n=\bar{f}^*(\varpi_1(Y))^n=\bar{f}^*(\varpi_1(Y)^n)
    \]
    whenever $\varpi_1(X)^n\ne 0$. Thus $\varpi_1(Y)^n\ne 0$, implying $\h(X)\le\h(Y)$.

    \noindent(3) Since $X$ is a free $\Z_2$-complex, $X_n$ is also a free $\Z_2$-complex, and so we can consider $\h(X_n)$. Let $j\colon X_n/\Z_2\to X/\Z_2$ denote the inclusion. Then since $j^*(\varpi_1(X))=\varpi_1(X_n)$ by \eqref{naturality}, we get
    \[
      \varpi_1(X_n)^n=j^*(\varpi_1(X))^n=j^*(\varpi_1(X)^n)\ne 0.
    \]
    Since the induced map $j^*\colon H^n(X/\Z_2;\Z_2)\to H^n(X_n/\Z_2;\Z_2)$ is injective, we obtain $\varpi_1(X_n)^n\ne 0$, implying $\h(X)\ge n$. On the other hand, we have $\h(X_n)\le n$ by (1), and thus we obtain $\h(X_n)=n$, as stated.
  \end{proof}

  We compute the Stiefel-Whitney height of the deleted product of a manifold. The deleted product of a topological space $X$ is defined by
  \[
    X^\star=\{(x,y)\in X\times X\mid x\ne y\}.
  \]
  Then $X^\star$ is a free $\Z_2$-space by the involution
  \[
    X^\star\to X^\star,\quad(x,y)\mapsto(y,x).
  \]
  Let $M$ be a manifold of dimension $d$. We equip $M$ with a Riemannian metric. For each $x\in M$, we define $\delta_x>0$ so as to satisfy that for some open neighborhood $U_x$ of $x$ in $M$, the exponential map $\mathrm{exp}\colon TM\to M$ is injective on the subsets $\{v\in T_yM\mid|v|\le\delta_x\}$ for each $y\in U_x$ (see \cite[Lemma 10.3]{M}). Let $\mathcal{U}$ denote a locally finite refinement of an open cover $M=\bigcup_{x\in M}U_x$, which exists because $M$ is paracompact. Let $\{\rho_U\}_{U\in\mathcal{U}}$ be a partition of unity subordinate to $\mathcal{U}$. We choose one point $x_U\in U$ for each $U\in\mathcal{U}$, and let
  \[
    \delta(x)=\sum_{U\in\mathcal{U}}\rho_U(x)\delta_{x_U}
  \]
  which is a continuous function on $M$. Now we define
  \[
    S=\{v\in TM\mid |v|=\delta(\pi(v))\}
  \]
  where $\pi\colon TM\to M$ denotes the projection. Then $S$ is a tangent sphere bundle on $M$. Consider a map
  \begin{equation}
    \label{g}
    S\to M\times M,\quad v\mapsto(\mathrm{exp}(v),\mathrm{exp}(-v)).
  \end{equation}
  For each $x\in M$, there is $U\in\mathcal{U}$ such that $x\in U$ and $\delta(x)\le\delta_U$. Then the exponential map $\mathrm{exp}\colon TM\to M$ is injective on each $S_x$, the fiber of the tangent sphere bundle $S\to M$ at $x\in M$. So image of the map \eqref{g} is included in $M^\star$, and so we get a map $g\colon S\to M^\star$. Let $\Z_2$ act on $S$ by the involution
  \[
    S\to S,\quad v\mapsto-v.
  \]
  Then the map $g$ is a $\Z_2$-map, and so by Lemma \ref{h(X)}, we get
  \begin{equation}
    \label{h(S)}
    \h(S)\le\h(M^\star).
  \end{equation}
  By \eqref{naturality}, $\varpi_1(S)$ restricts to $\varpi_1(S_x)$ for each $x\in M$. Since $S_x$ is identified with $S^{d-1}$ with antipodal $\Z_2$-action, $\varpi_1(S_x)$ is a ganerator of $H^1(S_x/\Z_2;\Z_2)\cong\Z_2$, where $S_x/\Z_2=\R P^{d-1}$. Then since $S/\Z_2$ is identified with the projectivization of $TM$, it follows from by \cite[(19.2.2) Theorem and Section 19.4]{tD} that there is an isomorphism
  \[
    H^*(S/\Z_2;\Z_2)\cong H^*(M;\Z_2)\{1,\varpi_1(S),\varpi_1(S)^2\ldots,\varpi_1(S)^{d-1}\}
  \]
  as an $H^*(M;\Z_2)$-module such that the equality
  \begin{equation}
    \label{w(S)}
    \varpi_1(S)^d+w_1(TM)\varpi_1(S)^{d-1}+w_2(TM)\varpi_1(S)^{d-2}+\cdots+w_d(TM)=0
  \end{equation}
  holds, where $R\{x_1,\ldots,x_n\}$ denotes the free $R$-module with basis $x_1,\ldots,x_n$ for a commutative ring $R$.

  \begin{proposition}
    \label{h(M)}
    If $M$ is a $d$-manifold with $w^{(k)}(M)\ne 1$, then
    \[
      \h(M^\star)\ge d+k.
    \]
  \end{proposition}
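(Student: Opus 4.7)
The plan is to reduce to computing the Stiefel-Whitney height of the tangent sphere bundle $S$ and then to do an inductive calculation using the relation \eqref{w(S)}. Since the $\Z_2$-map $g \colon S \to M^\star$ constructed in \eqref{g} gives $\h(S) \le \h(M^\star)$ by Lemma \ref{h(X)}(2), it suffices to prove that $\h(S) \ge d+k$, i.e.\ that $\varpi_1(S)^{d+k} \ne 0$ in $H^*(S/\Z_2;\Z_2)$.

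Write $t = \varpi_1(S)$ and $w_i = w_i(TM)$. By the Leray--Hirsch description recalled before the proposition, $H^*(S/\Z_2;\Z_2)$ is a free module over $H^*(M;\Z_2)$ with basis $1, t, t^2, \ldots, t^{d-1}$, and the multiplicative structure is determined by the relation
\[
  t^d = w_1 t^{d-1} + w_2 t^{d-2} + \cdots + w_d
\]
coming from \eqref{w(S)}. Freeness is the crucial point: a class $\sum_{i=1}^d c_i t^{d-i}$ with $c_i \in H^*(M;\Z_2)$ vanishes if and only if every $c_i$ vanishes.

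The main step is the following identity, which I would prove by induction on $k$:
\[
  t^{d+k} \;=\; \sum_{i=1}^{d} a_i^{(k)}(w_1,\ldots,w_d)\, t^{d-i} \;=\; \sum_{i=1}^{d} w_i^{(k)}(M)\, t^{d-i}.
\]
The base case $k=0$ is exactly the relation $t^d = \sum w_i t^{d-i}$ together with $a_i^{(0)} = x_i$. For the inductive step, multiplying $t^{d+k-1} = \sum_i a_i^{(k-1)} t^{d-i}$ by $t$, separating the $i=1$ term, and substituting $t^d = \sum_j w_j t^{d-j}$ yields
\[
  t^{d+k} = \sum_{i=1}^{d-1}\bigl(w_i\, a_1^{(k-1)} + a_{i+1}^{(k-1)}\bigr) t^{d-i} + w_d\, a_1^{(k-1)},
\]
which matches $\sum_i a_i^{(k)} t^{d-i}$ under the convention $a_{d+1}^{(k-1)}=0$ and the recursion $a_i^{(k)} = x_i a_1^{(k-1)} + a_{i+1}^{(k-1)}$.

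Once this identity is established, the conclusion is immediate: the hypothesis $w^{(k)}(M)\ne 1$ means that at least one of $w_1^{(k)}(M), \ldots, w_d^{(k)}(M)$ is nonzero, hence by freeness of the module, $t^{d+k} \ne 0$, giving $\h(S)\ge d+k$ and therefore $\h(M^\star)\ge d+k$ via \eqref{h(S)}. The main obstacle is simply matching the recursion coming from the defining polynomial of $t$ with the recursion defining the $a_i^{(k)}$; this is a clean bookkeeping induction rather than a conceptual difficulty, and no routine computation beyond this needs grinding out.
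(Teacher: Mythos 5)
Your proof is correct and takes essentially the same approach as the paper: both reduce to showing $\h(S)\ge d+k$ via the $\Z_2$-map $g\colon S\to M^\star$, then use the free-module structure of $H^*(S/\Z_2;\Z_2)$ over $H^*(M;\Z_2)$ and the relation \eqref{w(S)} to deduce $\varpi_1(S)^{d+k}=\sum_i w_i^{(k)}(M)\varpi_1(S)^{d-i}\ne 0$. The only difference is that you spell out the induction on $k$ matching the recursion for $a_i^{(k)}$, which the paper compresses into the phrase ``by \eqref{w(S)} and the definition of $w_i^{(k)}(M)$.''
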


  \begin{proof}
    By \eqref{w(S)} and the definition of $w^{(k)}_i(M)$, we have
    \[
      \varpi_1(S)^{d+k}+w_1^{(k)}(M)\varpi_1(S)^{d-1}+w_2^{(k)}(M)\varpi_1(S)^{d-2}+\cdots+w_d^{(k)}(M)=0.
    \]
    Then we get $\varpi_1(S)^{d+k}\ne 0$, implying $\h(S)\ge d+k$, whenever $w^{(k)}(M)\ne 1$. Thus by \eqref{h(S)}, we obtain $\h(M^\star)\ge d+k$, as stated.
  \end{proof}


  \section{Topological Radon theorem}

  The topological Radon theorem \cite{BB} states that for any continuous map $f\colon\Delta^{d+1}\to\R^d$, there are disjoint simplices $\sigma,\tau$ of $\Delta^{d+1}$ such that $f(\sigma)$ and $f(\tau)$ have a point in common. In this section, we prove the  topological Radon theorem for a map out of a manifold triangulation (Theorem \ref{Radon}), and then prove Theorems \ref{main} by the constraint method \cite{BFZ} applied to it.

  For a simplicial complex $K$, we define the simplicial deleted product $\widehat{K}^\star$ as the subcomplex of a CW complex $K\times K$ consisting of closed cells $\sigma\times\tau$ where $\sigma,\tau$ are disjoint simplices of $K$. Then $\widehat{K}^\star$ is a subspace of $K^\star$ such that the $\Z_2$-action on $K^\star$ restricts to $\widehat{K}^\star$. We connect the existence of a certain map out of $K$ to a $\Z_2$-map out of a skeleton of $\widehat{K}^\star$.

  \begin{lemma}
    \label{embedding}
    Let $K$ be a simplicial complex. If there is a map $f\colon K\to\R^d$ such that $f(\sigma)\cap f(\tau)=\emptyset$ whenever $\sigma,\tau$ are disjoint simplices of $K$ with $\dim\sigma+\dim\tau\le n$, then there is a $\Z_2$-map
    \[
      (\widehat{K}^\star)_n\to(\R^d)^\star.
    \]
  \end{lemma}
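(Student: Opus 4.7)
The plan is to build the desired $\Z_2$-map directly from $f$, with no obstruction theory. Define the continuous map $g\colon K\times K\to\R^d\times\R^d$ by $g(x,y)=(f(x),f(y))$. Equipping both sides with the coordinate-swap involution, $g$ is tautologically $\Z_2$-equivariant. The entire content of the lemma will be to verify that the restriction of $g$ to $(\widehat{K}^\star)_n$ actually lands in $(\R^d)^\star$.

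The first step is a cell-dimension bookkeeping for the simplicial deleted product. By construction, the closed cells of $\widehat{K}^\star$ are exactly the products $\sigma\times\tau$ for disjoint simplices $\sigma,\tau$ of $K$, and $\dim(\sigma\times\tau)=\dim\sigma+\dim\tau$. Hence
\[
(\widehat{K}^\star)_n=\bigcup\{\sigma\times\tau\mid\sigma,\tau\text{ disjoint simplices of }K,\ \dim\sigma+\dim\tau\le n\}.
\]
Along the way I would record that $\widehat{K}^\star$ is a $\Z_2$-subcomplex of $K\times K$ and that its $n$-skeleton inherits the involution, so $(\widehat{K}^\star)_n$ is a well-defined free $\Z_2$-complex and the target of the proposed map makes sense.

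The second step is to feed this into the hypothesis on $f$. Any point $(x,y)\in(\widehat{K}^\star)_n$ lies in some $\sigma\times\tau$ with $\sigma\cap\tau=\emptyset$ and $\dim\sigma+\dim\tau\le n$, so $f(x)\in f(\sigma)$ and $f(y)\in f(\tau)$. The assumption $f(\sigma)\cap f(\tau)=\emptyset$ then gives $f(x)\ne f(y)$, that is, $g(x,y)\in(\R^d)^\star$. Restricting $g$ to $(\widehat{K}^\star)_n$ thus yields the required $\Z_2$-map.

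I do not expect any genuine obstacle here; the statement is essentially a packaging lemma that matches the non-intersection hypothesis on $f$ to the cell structure of $\widehat{K}^\star$. The only small points to be careful about are making sure that the swap action truly restricts (immediate from the symmetry of disjointness) and that the dimension count on products of cells is the one expected, so that ``the $n$-skeleton'' is the right place where the hypothesis applies.
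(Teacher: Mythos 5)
Your proposal is correct and takes essentially the same approach as the paper: both restrict the tautological $\Z_2$-map $f\times f\colon K\times K\to\R^d\times\R^d$ and use the cell-dimension count on $\widehat{K}^\star$ to see the restriction lands in $(\R^d)^\star$. Your write-up is only a more explicit spelling-out of the same bookkeeping.
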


  \begin{proof}
    The $\Z_2$-map
    \[
      K\times K\to\R^d\times\R^d,\quad(x,y)\mapsto (f(x),f(y))
    \]
    restricts to a $\Z_2$-map $(\widehat{K}^\star)_n\to(\R^d)^\star$, where the $\Z_2$-action is given by switching the direct product factors. Indeed, every closed cell of $(\widehat{K}^\star)_n$ is of the form $\sigma\times\tau$ for disjoint simplices $\sigma,\tau$ of $K$ with $\dim\sigma+\dim\tau\le n$, and for such simplices of $K$, we have $f(\sigma)\cap f(\tau)=\emptyset$, that is, $(f\times f)(\sigma\times\tau)=f(\sigma)\times f(\tau)\subset(\R^d)^\star$.
  \end{proof}

  We prove a version of the Borsuk-Ulam theorem.

  \begin{proposition}
    \label{Borsuk-Ulam}
    Let $K$ be a simplicial complex. If $\h(\widehat{K}^\star)\ge d$, then for any continuous map $f\colon K\to\R^d$, there are disjoint simplices $\sigma,\tau$ of $K$ with $\dim\sigma+\dim\tau\le d$ such that $f(\sigma)$ and $f(\tau)$ have a point in common.
  \end{proposition}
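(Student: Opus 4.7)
The plan is to argue by contradiction, following the classical Borsuk-Ulam scheme but controlled by the Stiefel-Whitney height invariant already set up in Section 2. Suppose that for every pair of disjoint simplices $\sigma,\tau$ of $K$ with $\dim\sigma+\dim\tau\le d$ one has $f(\sigma)\cap f(\tau)=\emptyset$. Then Lemma \ref{embedding} (applied with $n=d$) immediately supplies a $\Z_2$-map $(\widehat{K}^\star)_d\to(\R^d)^\star$.

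Next I would reduce the target to a sphere. The standard map $(\R^d)^\star\to S^{d-1}$ sending $(x,y)\mapsto(x-y)/|x-y|$ is a $\Z_2$-equivariant deformation retraction, where $S^{d-1}$ carries the antipodal action; composing, one obtains a $\Z_2$-map
\[
  (\widehat{K}^\star)_d\longrightarrow S^{d-1}.
\]
Since $S^{d-1}/\Z_2=\R P^{d-1}$ and $\varpi_1(S^{d-1})$ is the generator of $H^1(\R P^{d-1};\Z_2)$, we have $\h(S^{d-1})=d-1$. By Lemma \ref{h(X)}(2) applied to this $\Z_2$-map,
\[
  \h((\widehat{K}^\star)_d)\le\h(S^{d-1})=d-1.
\]

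On the other hand, the hypothesis $\h(\widehat{K}^\star)\ge d$ combined with Lemma \ref{h(X)}(3) forces $\h((\widehat{K}^\star)_d)=d$, contradicting the previous inequality. Hence the assumption fails, and there must exist disjoint simplices $\sigma,\tau$ of $K$ with $\dim\sigma+\dim\tau\le d$ such that $f(\sigma)\cap f(\tau)\ne\emptyset$.

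There is no real obstacle here; the only points that deserve care are (i) checking that $\widehat{K}^\star$ is genuinely a free $\Z_2$-complex so that Lemma \ref{h(X)} applies (this is automatic because the swap action permutes cells $\sigma\times\tau\leftrightarrow\tau\times\sigma$ corresponding to disjoint pairs, with no cell fixed), and (ii) verifying that the $d$-skeleton of $\widehat{K}^\star$ consists exactly of the cells $\sigma\times\tau$ with $\dim\sigma+\dim\tau\le d$, which is the condition needed to apply Lemma \ref{embedding}.
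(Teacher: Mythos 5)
Your proof is correct and follows essentially the same path as the paper's: assume no such pair exists, apply Lemma \ref{embedding} to get a $\Z_2$-map $(\widehat{K}^\star)_d\to(\R^d)^\star$, identify $(\R^d)^\star$ with $S^{d-1}$ up to $\Z_2$-homotopy equivalence, and derive the contradiction $d=\h((\widehat{K}^\star)_d)\le\h((\R^d)^\star)=d-1$ from Lemma \ref{h(X)}. The only cosmetic difference is that you write out the retraction $(x,y)\mapsto(x-y)/|x-y|$ explicitly, whereas the paper describes the target sphere as the unit sphere of the orthogonal complement of the diagonal.
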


  \begin{proof}
    By Lemma \ref{h(X)}, we have $\h(\widehat{K}_d)=d$. Suppose there is a map $f\colon K\to\R^d$ such that for disjoint simplices $\sigma,\tau$ of $K$ with $\dim\sigma+\dim\tau\le d$, $f(\sigma)\cap f(\tau)=\emptyset$. Then by Lemma \ref{embedding}, there is a $\Z_2$-map $(\widehat{K}^\star)_d\to(\R^d)^\star$, and so by Lemma \ref{h(X)}, we get
    \[
      d=\h((\widehat{K}^\star)_d)\le\h((\R^d)^\star)
    \]
    On the other hand, $(\R^d)^\star$ is $\Z_2$-homotopy equivalent to the unit sphere of the orthogonal complement of the diagonal subspace in $\R^d\times\R^d$, which is a $(d-1)$-sphere. Then $\h((\R^d)^\star)=d-1$, and we obtain a contradiction. Thus the proof is finished.
  \end{proof}

  In order to apply Proposition \ref{h(M)} to Proposition \ref{Borsuk-Ulam}, we need the following lemma. The lemma is stated in \cite[Lemma 2.1]{Sh}, but the proof is false: the map $\beta$ is not continuous. So we give a fix.

  \begin{lemma}
    \label{deleted product}
    For a simplicial complex $K$, the inclusion $\widehat{K}^\star\to K^\star$ is a $\Z_2$-homotopy equivalence.
  \end{lemma}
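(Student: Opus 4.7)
The plan is to construct an explicit $\Z_2$-equivariant strong deformation retraction $r \colon K^\star \to \widehat{K}^\star$, modeled on the standard retraction for a single simplex that separates a pair $(x,y)$ into disjoint supports via the vertex-wise positive parts of $x-y$ and $y-x$.

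First I would fix notation: each $x \in K$ has unique barycentric coordinates $(x_v)_{v \in V(K)}$, and the carrier simplex of $x$ is spanned by $\mathrm{supp}(x) = \{v : x_v > 0\}$. For $(x,y) \in K^\star$, the sets $P = \{v : x_v > y_v\}$ and $N = \{v : y_v > x_v\}$ are disjoint, and both are nonempty since $x \ne y$ and $\sum_v (x_v - y_v) = 0$. I would define
\[
  \tilde x = \frac{(x-y)_+}{\|(x-y)_+\|_1}, \qquad \tilde y = \frac{(y-x)_+}{\|(y-x)_+\|_1},
\]
so that $\mathrm{supp}(\tilde x) = P \subset \mathrm{supp}(x)$ and $\mathrm{supp}(\tilde y) = N \subset \mathrm{supp}(y)$. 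Then $\tilde x$ lies in the carrier simplex of $x$ (hence in $K$), similarly for $\tilde y$, and since $P \cap N = \emptyset$ the pair $(\tilde x, \tilde y)$ is in $\widehat{K}^\star$. The assignment $(x,y) \mapsto (\tilde x, \tilde y)$ is $\Z_2$-equivariant because swapping $(x,y)$ swaps $P \leftrightarrow N$. I would then take the straight-line homotopy $H_t(x,y) = ((1-t)x + t\tilde x,\, (1-t)y + t\tilde y)$ connecting the identity to this retraction.

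The verifications I would carry out are: that $H_t$ stays in $K \times K$ (both components lie in the carrier simplices of $x$ and $y$ respectively, which are convex); that $H_t$ stays in $K^\star$ (at any $v \in P$ the coordinate difference equals $(1-t)(x_v - y_v) + t\tilde x_v > 0$, so the two components remain distinct for every $t \in [0,1]$); and that $H_t$ is the identity on $\widehat{K}^\star$ (if $\mathrm{supp}(x) \cap \mathrm{supp}(y) = \emptyset$ then $P = \mathrm{supp}(x)$ and $N = \mathrm{supp}(y)$, forcing $\tilde x = x$ and $\tilde y = y$). The main obstacle I anticipate is continuity of $r$: the denominators vanish only on the diagonal, which is excluded from $K^\star$, but since $K$ carries the weak topology one must check continuity cell-by-cell on each $(\sigma \times \tau) \cap K^\star$. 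This reduces to continuity of positive-part and $L^1$-normalization on a finite-dimensional euclidean region with the diagonal removed, which is routine, and isolates exactly the place where Shapiro's formula fails to be continuous.
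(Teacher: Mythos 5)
Your construction is the same as the paper's: your $\tilde x$ is exactly the map $\alpha(x,y)=\frac{1}{\sum_i\delta_i(x,y)}(\delta_1(x,y),\ldots,\delta_m(x,y))$ with $\delta_i(x,y)=\max\{x_i-y_i,0\}$, your $\tilde y$ is $\alpha(y,x)$, and the straight-line homotopy is identical. You spell out the verifications that the paper compresses into ``it is straightforward to check,'' but the approach is the same.
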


  \begin{proof}
    Since both $\widehat{K}^\star$ and $K^\star$ are free $\Z_2$-spaces and the inclusion $\widehat{K}^\star \to K^\star$ is a $\Z_2$-map, it is sufficient to show that the inclusion is a homotopy equivalence. So we construct a deformation retract of $K^\star$ onto $\widehat{K}^\star$. We may assume that vertices of $K$ are $(1,0,\ldots,0),(0,1,0,\ldots,0),\ldots,(0,\ldots,0,1)\in\R^m$. Then every point of $K$ is given by $(t_1,\ldots,t_m)\in\R^m$ for some $t_1,\ldots,t_m\ge 0$ satisfying $t_1+\cdots+t_m=1$. For $x=(x_1,\ldots,x_m),y=(y_1,\ldots,y_m)\in\R^m$ and $i=1,2,\ldots,m$, let
    \[
      \delta_i(x,y)=\max\{x_i-y_i,0\}
    \]
    Then $\delta_i(x,y)>0$ for some $i$ whenever $(x,y)\in K^\star$, and so we can define a continuous map
    \[
      \alpha\colon K^\star\to K,\quad(x,y)\mapsto\frac{1}{\delta_1(x,y)+\cdots+\delta_m(x,y)}(\delta_1(x,y),\ldots,\delta_m(x,y)).
    \]
    Now we define a map
    \[
      K^\star\times[0,1]\to K^\star,\quad((x,y),t)\mapsto((1-t)x+t\alpha(x,y),(1-t)y+t\alpha(y,x)).
    \]
    It is straightforward to check that this is a deformation retraction of $K^\star$ onto $\widehat{K}^\star$, completing the proof.
  \end{proof}

  We are ready to prove the topological Radon theorem for maps out of manifold triangulations.

  \begin{theorem}
    \label{Radon}
    Let $K$ be a triangulation of a $d$-manifold with $w^{(k)}(M)\ne 1$, where $1\le k\le d-1$. Then for any continuous map $f\colon K\to\R^{d+k}$, there are disjoint simplices $\sigma,\tau$ of $K$ with $\dim\sigma+\dim\tau\le d+k$ such that $f(\sigma)$ and $f(\tau)$ have a point in common.
  \end{theorem}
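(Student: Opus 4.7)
The plan is to chain together the three main results that have already been set up: the height computation of Proposition \ref{h(M)}, the $\Z_2$-homotopy equivalence of Lemma \ref{deleted product}, and the Borsuk-Ulam type statement of Proposition \ref{Borsuk-Ulam}. Each ingredient corresponds to one of the three things that need to be bridged -- characteristic classes, simplicial versus topological deleted products, and Radon partitions -- so once they are assembled in the right order the statement is essentially immediate.

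Concretely, I would argue as follows. Since $K$ is a triangulation of $M$, the underlying space $|K|$ is homeomorphic to $M$, so the topological deleted products $K^\star$ and $M^\star$ coincide as free $\Z_2$-spaces. By the hypothesis $w^{(k)}(M)\ne 1$ and Proposition \ref{h(M)}, we therefore have $\h(K^\star)=\h(M^\star)\ge d+k$. Lemma \ref{deleted product} provides a $\Z_2$-homotopy equivalence $\widehat{K}^\star\simeq K^\star$, and since the Stiefel-Whitney height is a $\Z_2$-homotopy invariant this transfers to $\h(\widehat{K}^\star)\ge d+k$. Applying Proposition \ref{Borsuk-Ulam} with $d$ replaced by $d+k$ then forces any continuous map $f\colon K\to\R^{d+k}$ to have a pair of disjoint simplices $\sigma,\tau$ with $\dim\sigma+\dim\tau\le d+k$ whose images meet, which is exactly the desired conclusion.

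There is no real obstacle here, and the proof should be a few lines at most. The only point worth flagging is that Proposition \ref{h(M)} is stated for the topological deleted product $M^\star$, whereas Proposition \ref{Borsuk-Ulam} requires the simplicial deleted product $\widehat{K}^\star$; this is precisely why the $\Z_2$-homotopy equivalence of Lemma \ref{deleted product} needs to be invoked as the intermediate step.
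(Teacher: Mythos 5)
Your proposal matches the paper's proof exactly: both chain Proposition \ref{h(M)} and Lemma \ref{deleted product} to get $\h(\widehat{K}^\star)\ge d+k$, then apply Proposition \ref{Borsuk-Ulam} (with $d$ replaced by $d+k$) to conclude. You have simply spelled out the intermediate identification $K^\star = M^\star$ and the homotopy-invariance of the height, which the paper leaves implicit.
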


  \begin{proof}
    By Proposition \ref{h(M)} and Lemma \ref{deleted product}, we have $\h(\widehat{K}^\star)\ge d+k$. Then by Proposition \ref{Borsuk-Ulam}, the statement is proved.
  \end{proof}

  Finally, we prove Theorem \ref{main} by the constraint method \cite{BFZ} applied to Theorem \ref{Radon}.

  \begin{proof}
    [Proof of Theorem \ref{main}]
    Suppose there is a map $f\colon K_{d+k}\to\R^{2d+2k}$ such that for any distinct simplices $\sigma,\tau$ of $K_{d+k}$, $f(\sigma)$ and $f(\tau)$ do not have a point in common. Then since $(K,K_{d+k})$ is an NDR pair, we can extend $f$ to a map $\tilde{f}\colon K\to\R^{2d+{2k}}$. On the other hand, there is a map $c\colon K\to\R$ such that $c^{-1}(0)=K_{d+k}$ because $(K,K_{d+k})$ is an NDR pair. We define a map
    \[
      F\colon K\to\R^{2d+2k+1},\quad x\mapsto(\tilde{f}(x),c(x)).
    \]
    By Theorem \ref{Radon}, there are disjoint faces $\sigma,\tau$ of $K$ with $\dim\sigma+\dim\tau\le 2d+2k+1$ such that for some $x\in\sigma,y\in\tau$, we have $F(x)=F(y)$, that is, $\tilde{f}(x)=\tilde{f}(y)$ and $c(x)=c(y)$. By taking smaller simplices if necessary, we may assume that $x\in\mathrm{Int}(\sigma)$ and $y\in\mathrm{Int}(\tau)$, where $\mathrm{Int}(\nu)$ denotes the interior of a cell $\nu$. If $c(x)\ne 0$, then $c(y)\ne 0$. So since $x\in \mathrm{Int}(\sigma)$ and $y\in\mathrm{Int}(\tau)$, we get $0\notin c(\mathrm{Int}(\sigma))$ and $0\notin c(\mathrm{Int(\tau}))$, implying $\dim\sigma\ge d+k+1$ and $\dim\tau\ge d+k+1$. Then $\dim\sigma+\dim\tau>2d+2k+1$, which is a contradiction, hence $c(x)=0$. In this case, we have $c(y)=0$. Then since $x\in\mathrm{Int}(\sigma)$ and $y\in\mathrm{Int}(\tau)$,  we get that $\sigma,\tau$ are simplices of $K_{d+k}$ such that $f(\sigma)\cap f(\tau)\supset\{f(x)=f(y)\}\ne\emptyset$. This is a contradiction too. Thus our supposition, the existence of the map $f$, is false, completing the proof.
  \end{proof}

\end{document}